\newcommand{\Z}{{\mathbb Z}}
\newcommand{\F}{{\mathbb F}}
\newcommand{\C}{{\mathbb C}}
\newtheorem{theorem}{Theorem}
\newtheorem{lemma}[theorem]{Lemma}
\newtheorem{corollary}[theorem]{Corollary}
\newtheorem{proposition}[theorem]{Proposition}
\newtheorem{example}{Example}
\newtheorem{remark}{Remark}
\begin{document}

\title[Christoffel words \& their lexicographic array ]{On Christoffel words \& their lexicographic array}

\author[Luca Q. Zamboni]{Luca Q. Zamboni}
\address{Institut Camille Jordan,
Universit\'e Claude Bernard Lyon 1,
43 boulevard du 11 novembre 1918,
69622 Villeurbanne Cedex, France}
\email{zamboni@math.univ-lyon1.fr}


\subjclass[2010]{Primary 68R15 ; Secondary 20K01, 20G40.}
\date{}

\begin{abstract} By a  Christoffel matrix we mean a $n\times n$  matrix corresponding to the lexicographic array of a Christoffel word  of length $n.$ In this note we show that if $R$ is an integral domain,  then the  product of two Christoffel matrices over $R$  is commutative and is a Christoffel matrix over $R.$  Furthermore, if a Christoffel matrix over $R$ is invertible, then its inverse is a Christoffel matrix over $R.$ Consequently,  the set $GC_n(R)$ of all $n\times n$ invertible Christoffel matrices over $R$ forms an abelian subgroup of $GL_n(R).$ The subset of 
$GC_n(R)$ consisting all invertible Christoffel matrices having some element $a$ on the diagonal and $b$ elsewhere (with $a,b \in R$ distinct) forms a subgroup $H$ of $GC_n(R).$ If $R$ is a field, then the quotient $GC_n(R)/H$ is isomorphic to $(\Z/nZ)^\times,$ the multiplicative group of integers modulo $n.$ It follows that for each finite field $F$ and each finite abelian group $G,$ there exists $n\geq 2$ and a faithful representation $G\rightarrow GL_n(F)$ consisting entirely of  $n\times n$ (invertible) Christoffel matrices over $F.$  We describe the structure of $GC_n(\Z/2\Z).$  
\end{abstract}

\maketitle

Christoffel words, first introduced by E.B. Christoffel  \cite{C} in 1875, are a special class of binary words linked to the theory of continued fractions, and  regarded as a finitary analogue of Sturmian sequences. A binary word $w$ is a {\it Christoffel word } if it is Lyndon and cyclically balanced.  By a {\it Christoffel matrix} we shall mean a square matrix representing the lexicographic array of a Christoffel word. More precisely, fix $n\geq 2$ and  let $(\Z/nZ)^\times$ denote the multiplicative group of integers modulo $n$ consisting of all integers from $\{0,1,\ldots , n-1\}$ coprime with $n.$  For each $m\in (\Z/nZ)^\times,$ let $C_m(a,b)$ denote the $n\times n$ matrix whose first row is the Christoffel word $c_m(a,b)$ of length $n,$ on distinct symbols $a$ and $b,$ beginning in $a$ (and hence ending in $b)$  and having $m$-many $a$'s and hence $(n-m)$-many $b$'s,  and whose subsequent rows consist in the other $n-1$ cyclic conjugates of $c_m(a,b)$ arranged in increasing lexicographic order (relative to the order $a<b).$ The quantity $m\in  (\Z/nZ)^\times$ is called the {\it type} of the Christoffel matrix $C_m(a,b)$ (or of the corresponding Christoffel word $c_m(a,b))$ and is equal to the number of occurrences of the first letter $a$ in $c_m(a,b).$  For example, taking $n=5$  

\begin{equation*}
C_2(a,b)=  
\begin{pmatrix}
a & b & a & b & b \\
a & b & b & a & b \\
b & a & b & a & b \\
b & a & b & b & a \\
b & b & a & b & a 
\end{pmatrix}
\end{equation*}
is the $5\times 5$ Christoffel matrix of type $2$ corresponding to the Christoffel word $ababb.$  It also follows that the reverse of $c_m(a,b)$ is the Christoffel word $c_{n-m}(b,a)$ corresponding to the Christoffel matrix $C_{n-m}(b,a).$ We remark that the notation $C_m(a,b)$ (or $c_m(a,b))$ does not make reference to the size $n$ of the matrix (or the length $n$ of the Christoffel word) in the same way that the notation $(i,j)$ for a $2$-cycle in the symmetric group $S_n$ does not make reference to the number of letters $n.$  

We begin by reviewing some basic facts concerning the lexicographic array of a Christoffel word : First, since all rows of $C_m(a,b)$ are conjugate to one another, it follows that each Christoffel matrix is, up to a permutation of the rows, a circulant matrix \cite{ks}.
We recall the following characterisation of the lexicographic array of a Christoffel word :  The lexicographic array of a binary word $w\in \{a,b\}^n$ having $m$-many $a$'s and $(n-m)$-many $b$'s with $m,n$ coprime has the {\it lexicographic constant shift property}, i.e., each row of the array is a constant shift $p$ of the preceding row if and only if it is the lexicographic array of a Christoffel word. Moreover, in this case the constant shift $p$ is equal to $m^{-1}\bmod n$ (see Theorem C in \cite{jz}).  Clearly if each row of a $n\times n$ matrix $M$ is a shift by $m^{-1}$ to the right of the preceding row, then each column of  $M$ is a shift by $m$ downward of the preceding column, and conversely. Hence each column of the matrix $C_m(a,b)$ is a shift by $m$ of the preceding column and of course the first column consists of a block of $a$'s of size $m$ followed by a block of $b$'s of size $n-m.$ In fact, this column description also characterises Christoffel matrices. To see this, suppose $M$ is a $n\times n$ matrix with the property that i) column $1$ of $M$  consists of a block of $a$'s of size $m$ followed by a block of $b$'s of size $n-m,$ and ii) each subsequent column is a shift downward by $m$ of the preceding. Then by ii), each row of $M$ is a shift to the right by $m^{-1}$ of the preceding row. In particular the rows of $M$ are all conjugate to one another. It is also easily verified that the rows of $M$ are arranged in lexicographic order relative to the order $a<b.$  In fact, any two consecutive rows differ in precisely two consecutive positions : in these positions, the upper row has $ab$ while the lower has $ba.$ Hence (by Theorem C), $M$ is the lexicographic array of a Christoffel word.   \\


Let $R$ be an integral domain. We write $0,1$ for the additive and multiplicative identity of $R.$ In the next lemma we show that Christoffel matrices over $R$ commute under multiplication and that the product of two $n\times n$ Christoffel matrices over  $R$ is a Christoffel matrix over $R$

\begin{lemma}\label{times}  Let $A=C_{m_1}(a_1,b_1)$ and $B=C_{m_2}(a_2,b_2)$ be two Christoffel matrices of size $n$ over $R$ with 
$m_1,m_2 \in  (\Z/nZ)^\times$ and $a_1,a_2,b_1,b_2 \in R.$  Put $r=\left \lceil \frac{m_1m_2}n \right \rceil$ and $s=\left \lfloor \frac{m_1m_2}n \right \rfloor.$ Then \[AB= BA  = C_{m_3}(a_3,b_3)\] where $m_3=m_1m_2\in  (\Z/nZ)^\times$ (meaning $m_1m_2=ns+m_3),$ 
\[a_3= r a_1a_2 +(m_1-r) a_1b_2 + (m_2-r) b_1a_2 + (n-(m_1+m_2)+r)b_1b_2\]
and 
\[b_3= s a_1a_2 +(m_1-s) a_1b_2 + (m_2-s) b_1a_2 + (n-(m_1+m_2)+s)b_1b_2.\]
Moreover, setting $\delta_i=a_i-b_i$ and $\sigma_i= m_ia_i + (n-m_i)b_i$ (for $i=1,2,3)$ we have  $\delta_3=\delta_1\delta_2\neq 0$ and
$\sigma_3=\sigma_1\sigma_2.$

\end{lemma}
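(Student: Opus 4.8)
The plan is to exploit the column characterisation of Christoffel matrices established in the preamble, since it reduces everything to bookkeeping about where the letters $a_i$ and $b_i$ land in the product. Concretely, I would compute the $(i,j)$ entry of $AB$ as $\sum_k A_{ik}B_{kj}$, where each factor $A_{ik}$ is either $a_1$ or $b_1$ and each $B_{kj}$ is either $a_2$ or $b_2$. The key combinatorial observation is that the number of indices $k$ for which $A_{ik}=a_1$ is exactly $m_1$ (each row of $A$ has $m_1$ many $a_1$'s), and likewise column $j$ of $B$ has $m_2$ many $a_2$'s; so the entry $(AB)_{ij}$ is a sum of four types of products $a_1a_2,\, a_1b_2,\, b_1a_2,\, b_1b_2$, whose multiplicities are governed by the overlap of the support of the $a_1$'s in row $i$ of $A$ with the support of the $a_2$'s in column $j$ of $B$.

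The main step is to pin down this overlap count. I would use the shift structure: column $1$ of $A$ is a block of $m_1$ many $a_1$'s, and by the constant shift property the positions of the $a$'s in each row form an arithmetic-progression-type set modulo $n$; similarly for $B$. The crucial claim is that the overlap multiplicity takes only two values, namely $r=\lceil m_1m_2/n\rceil$ on the diagonal positions and $s=\lfloor m_1m_2/n\rfloor$ off the diagonal, and moreover that these two values occur in a pattern making $AB$ itself satisfy the column characterisation with type $m_3\equiv m_1m_2$. I expect this to be the main obstacle: one must show that exactly $m_3$ of the diagonal entries carry the "$r$" count while the rest carry "$s$", and that the resulting matrix has first column a block of $m_3$ copies of $a_3$ followed by $b_3$'s, with each column a downward shift by $m_3$. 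Once the entry formulas for $a_3$ and $b_3$ are read off from the four-fold product decomposition, they match the stated expressions by construction, since $m_1m_2=ns+m_3$ forces the coefficient of $b_1b_2$ to be $n-(m_1+m_2)+r$ (resp.\ $+s$) by a counting identity.

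Commutativity $AB=BA$ would then follow from the symmetry of the resulting formulas in the two factors together with the fact that $m_1m_2=m_2m_1$ in $(\Z/nZ)^\times$, so both products are the Christoffel matrix of the same type $m_3$ with the same diagonal and off-diagonal entries; alternatively it follows from the circulant-up-to-permutation remark, since circulants commute. Finally, for the $\delta$ and $\sigma$ identities I would simply subtract the two displayed formulas: in $a_3-b_3$ the coefficient of each product shifts by $r-s$, and since $r-s=1$ (as $m_1m_2$ is coprime to $n$, it is not a multiple of $n$, so $\lceil m_1m_2/n\rceil-\lfloor m_1m_2/n\rfloor=1$), one obtains $\delta_3=a_1a_2-a_1b_2-b_1a_2+b_1b_2=(a_1-b_1)(a_2-b_2)=\delta_1\delta_2$; nonvanishing follows since $R$ is an integral domain and each $\delta_i\neq0$. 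For $\sigma_3=m_3a_3+(n-m_3)b_3$, I would substitute the formulas and collect terms, checking that the coefficient of $a_1a_2$ equals $m_1m_2$ (so that $\sigma_3$ factors as $(m_1a_1+(n-m_1)b_1)(m_2a_2+(n-m_2)b_2)=\sigma_1\sigma_2$); this is a direct algebraic verification once the $a_3,b_3$ expressions are in hand.
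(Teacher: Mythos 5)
Your overall strategy is the paper's: reduce each entry of $AB$ to a four-term decomposition $\#\{a_1a_2\}\cdot a_1a_2+\dots$, observe that all four multiplicities are determined by a single overlap count, show that this count takes only the two values $r=\lceil m_1m_2/n\rceil$ and $s=\lfloor m_1m_2/n\rfloor$, and then verify the column characterisation; commutativity from the symmetry of the formulas and the $\delta,\sigma$ identities by direct algebra are also exactly as in the paper. But the heart of the lemma is precisely the step you label ``the main obstacle'' and then do not carry out: you never prove that the overlap count lies in $\{r,s\}$, nor that the positions where it equals $r$ are arranged so that column $1$ of $AB$ is a block of exactly $m_3=m_1m_2\bmod n$ copies of $a_3$ followed by $b_3$'s. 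Worse, the one concrete assertion you do make about where the value $r$ occurs --- ``on the diagonal positions and $s$ off the diagonal'' --- is false: the $a_3$-positions of $C_{m_3}(a_3,b_3)$ are not the diagonal (see the paper's $5\times5$ example $C_2(a,b)$, whose diagonal reads $a,b,b,b,a$). The paper closes this gap in two moves you are missing. First, it verifies the shift identity $c_{i,j}=c_{i+m_1m_2,j+1}$ directly from the shift identities of $A$ and $B$, which means only column $1$ of the product ever needs to be analysed (for a general column $j$ the support of the $a_2$'s in column $j$ of $B$ is not an interval, so your ``overlap of two supports'' would be genuinely harder to count there). Second, for column $1$ it reduces the overlap to $|u|_{a_1}$ where $u$ is the length-$m_2$ prefix of row $i$ of $A$; the balance property gives $|u|_{a_1}\in\{r,s\}$ (``rich'' or ``poor''), the prefix of row $1$ is shown to be rich (so $c_{1,1}=a_3$), and the rich-to-poor transition happens exactly once --- at row $m_1m_2\bmod n$ --- because consecutive rows of the lexicographic array differ in exactly two adjacent positions, so the transition is located by where the $a_1$-block ends in column $m_2$ of $A$.

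Two smaller points. Your fallback argument for commutativity (``it follows from the circulant-up-to-permutation remark, since circulants commute'') does not work as stated: if $A=P_1\Gamma_1$ and $B=P_2\Gamma_2$ with $P_i$ permutations and $\Gamma_i$ circulant, then $AB=P_1\Gamma_1P_2\Gamma_2$ and $BA=P_2\Gamma_2P_1\Gamma_1$ need not coincide just because $\Gamma_1\Gamma_2=\Gamma_2\Gamma_1$; stick with the symmetry of the explicit formulas, which is what the paper uses. Your treatment of $r-s=1$, of $\delta_3=\delta_1\delta_2\neq0$ in an integral domain, and of $\sigma_3=\sigma_1\sigma_2$ is fine and matches the paper.
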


\begin{proof}We begin by noting that as $R$ is commutative, each of $a_3$ and $b_3$ given above is invariant under exchange of indices $1$ and $2.$ 
Thus it suffices to show that $AB=C_{m_3}(a_3,b_3).$ Write $A=(a_{i,j}),$  $B=(b_{i,j})$ and  $C=(c_{i,j})= AB.$

To show that $C=C_{m_1m_2}(a_3,b_3)$, we use the column characterisation of Christoffel matrices mentioned earlier :  i) $c_{i,j}=c_{i+m_1m_2,j+1}$ for all $i,j$ ; ii) the first column of the matrix $C$ consists of $m_1m_2 \bmod n$ consecutive $a_3$ followed by $(n-m_1m_2) \bmod n$ consecutive $b_3$ ; iii) $a_3\neq b_3.$   

As for i), using the relations $a_{i,j}=a_{i+m_1,j+1}$ (and hence $a_{i,j}=a_{i+m_1m_2, j+m_2})$ and $b_{i,j}=b_{i+m_2,j+1}$ we have 
\[c_{i,j}=\sum_{k=1}^na_{i,k}b_{k,j}=\sum_{k=1}^na_{i,k}b_{k+m_2,j+1}=\sum_{k=1}^na_{i+m_1m_2,k+m_2}b_{k+m_2,j+1}=c_{i+m_1m_2,j+1}
\]
where all indices are taken modulo $n.$

We next look at column $1$ of the matrix $C.$ Fix $1\leq i \leq n$ and write row $i$ of matrix $A$ as $uv$ where $u$ is the prefix of length $m_2$ and $v$ the suffix of length $n-m_2.$ 
Then taking the dot product of row $i$ of $A$ with column $1$ of $B$ (which is made up of $m_2$ consecutive $a_2$ followed by $(n-m_2)$ consecutive $b_2)$ gives 
\[c_{i,1}=|u|_{a_1}a_1a_2 + |v|_{a_1}a_1b_2 + |u|_{b_1}b_1a_2 + |v|_{b_1}b_1b_2\]
where as usual $|x|_a$ denotes the number of occurrences of $a$ in $x.$ 
Put $r=\left \lceil \frac{m_1m_2}n \right \rceil$ and $s=\left \lfloor \frac{m_1m_2}n \right \rfloor.$
First consider the case in which $u$ is {\it rich} in $a_1,$ meaning $|u|_{a_1}=r$ ; in this case  $v$ is poor in $a_1$ and hence
\[|v|_{a_1}=\left \lfloor \frac{m_1|v|}n \right \rfloor = \left \lfloor \frac{m_1(n-m_2)}n \right \rfloor=m_1-r.\]
Similarly, as $u$ is rich in $a_1,$ it follows that $u$ is poor in $b_1$ and hence
\[|u|_{b_1}=\left \lfloor \frac{(n-m_1)|u|}n \right \rfloor =\left \lfloor \frac{(n-m_1)m_2}n \right \rfloor = m_2 -r.\]
And finally, as $u$ is poor in $b_1$ it follows that $v$ is rich in $b_1$ and hence
\[|v|_{b_1}= \left \lceil \frac{(n-m_1)|v|}n \right \rceil = \left \lceil \frac{(n-m_1)(n-m_2)}n \right \rceil  = n-(m_1+m_2) + r.\]

If on the other hand $u$ is poor in $a_1,$ then a similar calculation shows the same result except with $r$ everywhere replaced by $s.$ 
Thus if the prefix of length $m_2$ of row $i$ of $A$ is rich in $a_1$ then $c_{i,1}=a_3,$ while if it is poor in $a_1,$ then $c_{i,1}=b_3.$ Now for any $k\geq 1,$ the  prefix $a_1x_1\cdots x_{k-1}$ of length $k$  of row $1$ of $A$ is rich in $a_1$ (since the corresponding suffix of the same length is given by $x_{k-1}\cdots x_1b_1).$ Thus in particular $c_{1,1}=a_3,$ i.e., column $1$ of $C$ begins in $a_3.$  Now following \cite{jz, mrs},  any two consecutive rows of a Christoffel matrix differ precisely in two consecutive positions. When comparing lexicographically the prefixes of length $m_2$ of the rows of $A,$ the switch from rich to poor (in $a_1)$ happens when $a_{i, m_2}a_{i,m_2+1}=a_1b_1$ and $a_{i+1, m_2}a_{i+1,m_2+1}=b_1a_1,$ in other words the position $i$ in column $m_2$ of $A$ where the $a_1$-block terminates. Since each column of $A$ is a shift by $m_1$ of the previous column, in column $m_2$ of $A,$ the $a_1$-block ends in row $m_1m_2 \bmod n.$   

Finally to see that $a_3\neq b_3$ we have
\begin{align*}
\delta_3=a_3-b_3 &= (r-s)a_1a_2 + (s-r)a_1b_2 + (s-r)b_1a_2 + (r-s)b_1b_2 \\
& = a_1a_2 - a_1b_2 - b_1a_2 + b_1b_2 \\
& = (a_1-b_1)(a_2-b_2)=\delta_1\delta_2 \neq 0
\end{align*}  
since $a_1\neq b_1$ and $a_2\neq b_2.$ Finally, to see that $\sigma_3=\sigma_1\sigma_2$ it suffices to multiply out $(m_1a_1 + (n-m_1)b_1)(m_2a_2 + (n-m_2)b_2)$ and use the relations $m_1m_2=ns+m_3$ and $r-s=1.$
\end{proof}
\vspace{.2 in}
\noindent If $R$ is not an integral domain, then the product of two Christoffel matrices may not be binary - for example, taking $n=2$ over $\Z/6\Z$ we have
$C_1(2,0)C_1(3,0)$ is the zero $2\times 2$ matrix.

\vspace{.2 in}
The following proposition shows that in general Christoffel matrices are diagonalisable :

\begin{proposition}\label{diag} Let $n\geq 2$ and $m\in (\Z/nZ)^\times.$ Let $A=C_m(a,b)$ be a Christoffel matrix of size $n$ with $a,b$ belonging to an algebraically closed field $\F.$ Then the minimal polynomial $m_A(x)$ divides $(x^k-\delta^k)(x-\sigma)$ where $k$ is the order of $m,$   $\delta =a-b$ and $\sigma=ma+(n-m)b.$  Moreover, if neither $n$ nor $k$ is divisible by $\text{char}(\F)$ (so in particular in characteristic $0),$  then $A$ is diagonalisable. \end{proposition}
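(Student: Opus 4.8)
The plan is to treat the two assertions separately: I would prove the divisibility of the minimal polynomial by a single algebraic identity valid in every characteristic, and only then bring in the hypothesis $\mathrm{char}(\F)\nmid n,k$ to establish diagonalisability through the spectral theory of the cyclic shift.

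For the minimal polynomial, I would first record two structural facts about $A=C_m(a,b)$. Since every row and every column of $A$ contains exactly $m$ copies of $a$ and $n-m$ copies of $b$, writing $J$ for the all-ones $n\times n$ matrix we have $A\mathbf{1}=\sigma\mathbf{1}$ and, dually, $JA=\sigma J$ (each entry of $JA$ is a column sum of $A$, namely $\sigma$). Next, iterating Lemma \ref{times} gives $A^j=C_{m^j}(\cdot,\cdot)$ with associated difference $\delta^j$; taking $j=k$ and using $m^k\equiv 1\pmod n$ shows that $A^k$ is a Christoffel matrix of type $1$. The lexicographic array of the type-$1$ Christoffel word $ab^{n-1}$ places its distinguished letter exactly on the diagonal, so $A^k=\delta^kI+\beta J$ for some $\beta\in\F$. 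Consequently $A^k-\delta^kI=\beta J$, and then $(A^k-\delta^kI)(A-\sigma I)=\beta J(A-\sigma I)=\beta(JA-\sigma J)=0$. This proves $m_A(x)\mid (x^k-\delta^k)(x-\sigma)$ with no hypothesis on the characteristic.

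For diagonalisability, assume $\mathrm{char}(\F)\nmid n$ and $\mathrm{char}(\F)\nmid k$, and let $S$ denote the cyclic shift $Se_i=e_{i+1}$. The column description of Christoffel matrices recalled in the introduction says precisely that column $j+1$ of $A$ is column $j$ shifted by $m$, i.e. $AS=S^mA$. Since $\mathrm{char}(\F)\nmid n$ and $\F$ is algebraically closed, $S$ has $n$ distinct eigenvalues $\omega^\ell$ (the $n$-th roots of unity) with a one-dimensional eigenline $\F w^{(\ell)}$ for each $\ell\in\Z/n\Z$. Feeding $w^{(\ell)}$ into $AS=S^mA$ forces $Aw^{(\ell)}$ to be an eigenvector of $S^m$ for the eigenvalue $\omega^\ell$, and since $m$ is invertible modulo $n$ this eigenline is exactly $\F w^{(m^{-1}\ell)}$. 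Hence $A$ permutes the eigenlines of $S$ according to $\ell\mapsto m^{-1}\ell$, and grouping the lines into orbits of this permutation decomposes $\F^n$ into $A$-invariant subspaces $V_O$. On the orbit $\{0\}$ one has $w^{(0)}=\mathbf{1}$ and $Aw^{(0)}=\sigma w^{(0)}$. On a non-trivial orbit $O=\{\ell_0,\ldots,\ell_{d-1}\}$ the matrix of $A|_{V_O}$ in the basis $(w^{(\ell_i)})$ is a weighted $d$-cycle $Aw^{(\ell_i)}=\lambda_{\ell_i}w^{(\ell_{i+1})}$; a short geometric-series computation expresses each weight as a nonzero scalar (essentially $\delta\,(\omega^{\ell_i}-1)/(\omega^{m^{-1}\ell_i}-1)$), so the product $\Pi$ of the weights around the cycle is nonzero—indeed a telescoping of these ratios shows $\Pi=\delta^d$, reconciling the spectrum with the factor $x^k-\delta^k$. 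Thus $A^d|_{V_O}=\Pi\,I$, so $A|_{V_O}$ is annihilated by $x^d-\Pi$. Because every orbit size $d$ divides $k$ and $\mathrm{char}(\F)\nmid k$, we have $\mathrm{char}(\F)\nmid d$, whence $x^d-\Pi$ has distinct roots and $A|_{V_O}$ is diagonalisable; assembling the blocks shows $A$ is diagonalisable.

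The main obstacle, deserving the most care, is the passage from the intertwining relation $AS=S^mA$ to the block structure: one must check that $A$ genuinely permutes the one-dimensional eigenlines of $S$ (this is where $\mathrm{char}(\F)\nmid n$ enters, guaranteeing that $S$ is diagonalisable with simple spectrum), and then verify that every cyclic weight is nonzero so that each block satisfies a \emph{separable} polynomial $x^d-\Pi$. Controlling the orbit sizes—noting $d\mid k$ so that the hypothesis $\mathrm{char}(\F)\nmid k$ propagates to every block—is the remaining bookkeeping point.
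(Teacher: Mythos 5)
Your proof is correct, and the first half (the divisibility $m_A(x)\mid(x^k-\delta^k)(x-\sigma)$) is essentially the paper's own argument: $A^k=C_1(c,d)=\delta^kI+dJ$ by Lemma~\ref{times}, and $J(A-\sigma I)=0$ because every column of $A$ sums to $\sigma$. For diagonalisability, however, you take a genuinely different and heavier route. The paper stays entirely inside the annihilating polynomial $p(x)=(x^k-\delta^k)(x-\sigma)$ and splits into two cases: if $\sigma^k\neq\delta^k$ then $p$ has $k+1$ distinct roots; if $\sigma^k=\delta^k$ then comparing $A^k-\delta^kI=dJ$ with the row-sum identity forces $nd=0$, hence $d=0$ (here $\mathrm{char}(\F)\nmid n$ is used), so $m_A\mid x^k-\delta^k$, which is separable. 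Your argument instead exploits the intertwining relation $AS=S^mA$ with the cyclic shift, shows $A$ permutes the character eigenlines of $S$ by $\ell\mapsto m^{-1}\ell$, and diagonalises each orbit block via the separable polynomial $x^d-\delta^d$; all the steps check out (the orbit sizes do divide $k$, the cycle weights are nonzero by the geometric-series computation, and the telescoping product is indeed $\delta^d$). What your approach buys is considerably more: the full spectrum and an explicit eigenbasis of characters, which in effect proves the simultaneous diagonalisability stated in the paper's subsequent Remark rather than quoting commutativity. What it costs is the explicit weight computation, which could be bypassed: since $Jw^{(\ell)}=0$ for $\ell\neq 0$, your own first half gives $A^kw^{(\ell)}=\delta^kw^{(\ell)}$, whence the cycle product $\Pi$ satisfies $\Pi^{k/d}=\delta^k\neq 0$ without any geometric series. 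The paper's case split is the more economical path to the bare statement.
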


\begin{proof} Put $p(x)=(x^k-\delta^k)(x-\sigma).$ Since $m^k\equiv 1 \bmod n, $ by Lemma~\ref{times} we have $A^k=C_{m^k}(c,d)=C_1(c,d)$ for some $c,d\in \F$ with $c-d=\delta^k.$ It follows that $A^k-\delta^kI=(d)_{n\times n}$ the constant matrix of all $d$'s. As each column of the matrix $A-\sigma I$ sums to $0,$ we deduce that $(A^k-\delta^kI)(A-\sigma I)=(0)_{n\times n},$ i.e., $p(A)=(0)_{n\times n}.$  Thus $m_A(x)$ divides  $p(x)=(x^k-\delta ^k)(x-\sigma).$ Now assume that neither $n$ nor $k$ is divisible by $\text{char}(\F).$ If $\sigma^k\neq \delta^k,$ then  $p(x)$ has $k+1$ distinct roots, namely the $k$'th roots of $\delta^k$ (of which there are $k$) and $\sigma.$ Since $m_A(x)$ divides $p(x),$  the roots of $m_A(x)$ are each of multiplicity $1$ and hence $A$ is diagonalisable. 
If on the other hand $\sigma^k=\delta^k,$ then we claim that $A^k-\delta^kI=(0)_{n\times n}.$ In fact, setting $B=A^k-\delta^kI$ we have that $B=(d)_{n\times n}$ but also $B=A^k -\sigma^kI.$ By Lemma~\ref{times},  the row/column sum of $A^k$ is $\sigma^k,$ and therefore each row/column of $B$ sums to $0.$  
So $nd=0$ and since $\text{char}(\F)$ does not divide $n$ it follows that $d=0$ as required. Hence $m_A(x)$ divides $x^k-\delta^k$ which has $k$ distinct roots. Thus again all roots of the minimal polynomial $m_A(x)$ are simple and hence $A$ is diagonalisable.  
\end{proof}

\begin{remark}\rm
\noindent The Christoffel  matrix
\begin{equation*}  
A=\begin{pmatrix}
0 & 0 & 1  \\
0 & 1 & 0 \\
1 & 0 & 0  \\
\end{pmatrix}
\end{equation*} 
is diagonalisable in characteristic $0$ but not in characteristic $2.$  In fact, $m_A(x)=x^2-1$ which has distinct roots ($\pm 1)$ in characteristic $0.$ While  in characteristic $2,$ $m_A(x)=x^2-1=(x-1)^2$ and hence $A$ is not diagonalisable. 
\end{remark}

\begin{remark}\rm Since all $n\times n$ Christoffel matrices over the complex numbers commute under multiplication (by Lemma~\ref{times}) and are diagonalisable (by Proposition~\ref{diag}), they are  simultaneously diagonalisable, i.e., there exists a matrix $P\in GL_n(\C)$ such that $P^{-1}AP$ is diagonal for every $n\times n$ Christoffel matrix $A$ over $\C.$   The columns of $P$ constitute a basis for $\C^n$ consisting of eigenvectors of complex Christoffel matrices of size $n.$ 

\vspace{.2 in}
\end{remark}

\begin{example}\rm Fix $n\geq 2$ and let $\mathcal C_n\{0,1\}$ denote the set of all Christoffel words of length $n$ over the alphabet $\{0,1\}.$ We shall apply Lemma~\ref{times} to define an associative and commutative binary operation  on $\mathcal C_n\{0,1\},$ which as we shall see, induces an associative and commutative binary operation  on central words over the alphabet $\{0,1\}$ of length $n-2.$ By a {\it central word} we mean the central factor of a Christoffel word obtained by deleting its first and last letter. We recall that every central word is a palindrome. Let $w_1, w_2 \in C_n\{0,1\}$ be two $\{0,1\}$-Christoffel words  (say of type $m_1$ and  $m_2$ respectively) and let $A_1$ and $A_2$ be their corresponding Christoffel matrices. Regarding the alphabet $\{0,1\}$ as the field $\F_2$ and $A_1$ and $A_2$ as two Christoffel matrices over $\F_2,$ it follows from Lemma~\ref{times} that the product $A_1A_2$ is a Christoffel matrix over $\F_2$ of type $m_1m_2 \bmod n.$ The first row of  $A_1A_2$ is therefore a Christoffel word $w_3$ over $\{0,1\}$ and of type $m_1m_2.$ We define  $w_1\cdot w_2=w_3.$ It follows immediately that $\cdot$ defines an associative and commutative binary operation on $\mathcal C_n\{0,1\}.$

For example, taking $n=5$ and $w_1=00101$ and $w_2=11110$ we compute $w_1\cdot w_2$ : 

\begin{equation*}  
\begin{pmatrix}
0 & 0 & 1 & 0 & 1 \\
0 & 1 & 0 & 0 & 1 \\
0 & 1 & 0 & 1 & 0 \\
1 & 0 & 0 & 1 & 0 \\
1 & 0 & 1 & 0 & 0 
\end{pmatrix} 
\begin{pmatrix}
1 & 1 & 1 & 1 & 0 \\
1 & 1 & 1 & 0 & 1 \\
1 & 1 & 0 & 1 & 1 \\
1 & 0 & 1 & 1 & 1 \\
0 & 1 & 1 & 1 & 1 
\end{pmatrix}
= \begin{pmatrix}
1 & 0 & 1 & 0 & 0 \\
1 & 0 & 0 & 1 & 0 \\
0 & 1 & 0 & 1 & 0 \\
0 & 1 & 0 & 0 & 1 \\
0 & 0 & 1 & 0 & 1 
\end{pmatrix}
\end{equation*}
\noindent and hence  $00101\cdot 11110= 10100.$ Notice that $10^{n-1}\cdot w=w$ for all $w\in \mathcal C_n$ since the Christoffel matrix corresponding to $10^{n-1}$ is the identity matrix. 

Now let $c_1, c_2  \in \{0,1\}^{n-2}$ be two central words and put $w_1=a_1c_1b_1$ and $w_2=a_2c_2b_2$ where $\{a_i,b_i\}=\{0,1\}$ for $i=1,2.$ Then we define $c_1\cdot c_2 = c_3$ where $c_3$ is the central word corresponding $w_1\cdot w_2.$  Let us see that $c_3$ is independent of the choice of $a_i,b_i.$ Since multiplication of Christoffel words is commutative, it suffices to show that $1 c_1 0 \cdot 1 c_2 0$ and $0 c_1 1 \cdot 1 c_2 0$ share a common central factor. 
So let $w= 1 c_1 0 \cdot 1 c_2 0$ and $w'= 0 c_1 1 \cdot 1 c_2 0.$ 
To see that $w$ and $w'$ share a common central factor, it suffices to show that $w$ and $w'$ are reverses of one another. 
We begin by replacing each Christoffel word by its corresponding  Christoffel matrix as follows : $ 1 c_1 0 \leftrightarrow C_{m_1}(1,0),$ $0 c_1 1 \leftrightarrow C_{n-m_1}(0,1)$ and $1 c_2 0 \leftrightarrow C_{m_2}(1,0).$ 
Now multiplying the corresponding Christoffel matrices and applying Lemma~\ref{times} we obtain 
\[C_{m_1}(1,0)C_{m_2}(1,0) = C_{m_1m_2}(r \bmod 2, s \bmod 2)\]
and
\[C_{n-m_1}(0,1)C_{m_2}(1,0) = C_{n-m_1m_2}(s \bmod 2, r \bmod 2)\]
where $r=\left \lceil \frac{m_1m_2}n \right \rceil$ and $s=\left \lfloor \frac{m_1m_2}n \right \rfloor.$ 
So $w$ corresponds to $C_{m_1m_2}(r \bmod 2, s \bmod 2)$ while  $w'$  corresponds to $C_{n-m_1m_2}(s \bmod 2, r \bmod 2).$ It  follows that $w'$ is the reverse of $w.$
\end{example}

\vspace{.2 in}

We now consider the inverse of an invertible Christoffel matrix over an integral domain $R.$ The matrix $C_m(a,b)$ is invertible if and only if  $\det C_m(a,b)$ is a unit in $R.$ Since every Christoffel matrix $C_m(a,b)$ is a row permutation of a circulant $n\times n$ matrix,  it follows that  
\begin{equation}\label{determinant}\det C_m(a,b)=\pm (ma +(n-m)b)(a-b)^{n-1}\end{equation} (see \cite{h, ks}). In particular, this implies that the row (and column) sum $ma + (n-m)b \neq 0.$ If $R$ is a field, then this is both a necessary and sufficient condition for $C_m(a,b)$ to be invertible, i.e., over a field $F$ the Christoffel matrix $C_m(a,b)$ is invertible if and only if $ma + (n-m)b\neq 0.$

\noindent We now show that the inverse of an invertible Christoffel matrix of type $m$ is a  Christoffel matrix of type $m^{-1}$ :

\begin{lemma}\label{inverse} Let $C_m(a,b)$ be an invertible Christoffel matrix over an integral domain $R.$  Then its inverse is a Christoffel matrix over $R.$ More precisely, define $c,d,e,f \in R$ by  \[C_m(a,b)C_{m^{-1}}(1,0)=C_1(c,d)\] and 
\[C_m(a,b)C_{m^{-1}}(0,1)=C_1(e,f).\] Then $cf-ed$ is a unit in $R$ and 
\begin{equation} \label{e-inverse} C_m(a,b)^{-1}=C_{m^{-1}}(f(fc-ed)^{-1}, -d(fc-ed)^{-1}).\end{equation}
\end{lemma}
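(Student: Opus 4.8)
The plan is to work within the abelian group structure already established. By Lemma~\ref{times}, the set of type-$m^{-1}$ Christoffel matrices over $R$ of the form $C_{m^{-1}}(x,y)$ (with $x,y\in R$, not necessarily distinct) is closed under right-multiplication by $C_m(a,b)$ in the sense that the product lands among type-$(m\cdot m^{-1})=$ type-$1$ matrices $C_1(\cdot,\cdot)$. The key observation is that a Christoffel matrix $C_{m^{-1}}(x,y)$ is an $R$-linear combination of the two ``basis'' matrices $C_{m^{-1}}(1,0)$ and $C_{m^{-1}}(0,1)$: indeed each entry is either $x$ or $y$, so $C_{m^{-1}}(x,y)=x\,C_{m^{-1}}(1,0)+y\,C_{m^{-1}}(0,1)$, since $C_{m^{-1}}(1,0)$ has a $1$ exactly where $C_{m^{-1}}(x,y)$ has an $x$ and $C_{m^{-1}}(0,1)$ has a $1$ exactly where it has a $y$. (Here I am using that $C_{m^{-1}}(1,0)$ and $C_{m^{-1}}(0,1)$ are complementary $0/1$ matrices summing to the all-ones matrix.)

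First I would establish this linearity and propagate it through multiplication by $C_m(a,b)$. Because matrix multiplication is $R$-bilinear and $R$ is commutative, for any $x,y\in R$ we get
\begin{equation*}
C_m(a,b)\,C_{m^{-1}}(x,y)=x\,C_m(a,b)C_{m^{-1}}(1,0)+y\,C_m(a,b)C_{m^{-1}}(0,1)=x\,C_1(c,d)+y\,C_1(e,f).
\end{equation*}
The right-hand side is $C_1(xc+ye,\,xd+yf)$, again using that $C_1(c,d)=c\,C_1(1,0)+d\,C_1(0,1)$ and that this representation is additive in the pair $(c,d)$. Since $C_1(1,0)$ is the identity matrix, $C_1(p,q)$ equals the identity $I$ precisely when $p=1$ and $q=0$. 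Thus to invert $C_m(a,b)$ I want to choose $x,y\in R$ so that
\begin{equation*}
xc+ye=1,\qquad xd+yf=0,
\end{equation*}
for then $C_{m^{-1}}(x,y)$ will be the desired inverse.

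Next I would solve this $2\times 2$ linear system over $R$. Its coefficient matrix is $\left(\begin{smallmatrix} c & e\\ d & f\end{smallmatrix}\right)$, with determinant $cf-ed$. The main obstacle, and the heart of the lemma, is showing that $cf-ed$ is a unit in $R$; once that is known, Cramer's rule gives $x=f(cf-ed)^{-1}$ and $y=-d(cf-ed)^{-1}$, which is exactly the asserted formula (noting $cf-ed=fc-ed$ by commutativity). To prove $cf-ed$ is a unit I would pass to determinants: taking determinants of the two defining equations and using multiplicativity gives $\det C_m(a,b)\cdot\det C_{m^{-1}}(1,0)=\det C_1(c,d)$ and similarly for the second. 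By formula~\eqref{determinant}, $\det C_1(c,d)=\pm c(c-d)^{n-1}$ and $\det C_1(e,f)=\pm e(e-f)^{n-1}$; more usefully, I expect the cleanest route is to relate $cf-ed$ directly to the invertible quantities. Using the $\sigma$ and $\delta$ multiplicativity from Lemma~\ref{times}, $C_1(c,d)$ has $\delta$-value $c-d=\delta\cdot 1=\delta$ and $\sigma$-value $\sigma_m\cdot m^{-1}$-row-sum, while $C_1(e,f)$ has $e-f=\delta$ as well; combining the two relations should express $cf-ed$ as a product of $\delta^{\,n-1}$ (or a comparable power) with the nonzero row-sum $\sigma=ma+(n-m)b$, each of which is a unit because $\det C_m(a,b)=\pm\sigma\delta^{n-1}$ is a unit. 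The delicate point is extracting the exact factor so that $cf-ed$ is manifestly a unit rather than merely a nonzerodivisor, and I would verify this either by the determinant computation just sketched or by directly computing $c,d,e,f$ from the entry formulas of Lemma~\ref{times} and simplifying $cf-ed$.

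Finally, having produced $x,y$ with $C_m(a,b)C_{m^{-1}}(x,y)=I$, I would note that $C_{m^{-1}}(x,y)$ is genuinely a Christoffel matrix, which requires $x\neq y$; but $x-y=(f+d)(cf-ed)^{-1}$ and invertibility forces $x\neq y$ since otherwise the product could not be $I$ (equivalently $\delta$-multiplicativity forces the $\delta$-value of the inverse to be $\delta^{-1}\neq 0$). Commutativity of the group (Lemma~\ref{times}) then upgrades this right-inverse to a two-sided inverse, completing the proof.
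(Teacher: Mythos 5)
Your overall architecture matches the paper's: both proofs exploit the decomposition $C_{m^{-1}}(x,y)=x\,C_{m^{-1}}(1,0)+y\,C_{m^{-1}}(0,1)$, push it through $C_m(a,b)$ by bilinearity to get $C_1(xc+ye,\,xd+yf)$, and then solve the $2\times 2$ system $xc+ye=1$, $xd+yf=0$ by Cramer's rule. Indeed your organization is in two respects a little cleaner than the paper's: using bilinearity uniformly lets you avoid the paper's separate treatment of the degenerate case $df=0$ (where $C_{m^{-1}}(f,0)$ or $C_{m^{-1}}(0,-d)$ fails to be a genuine Christoffel matrix and Lemma~\ref{times} cannot be quoted directly), and your rank argument for $x\neq y$ (a constant matrix times anything cannot be the identity for $n\geq 2$) replaces the paper's separate contradiction argument that $f+d\neq 0$.

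However, there is a genuine gap at what you yourself correctly identify as ``the heart of the lemma'': you never actually prove that $cf-ed$ is a unit. You offer two candidate routes and commit to neither. The determinant route does not work as sketched: taking determinants of $C_m(a,b)C_{m^{-1}}(1,0)=C_1(c,d)$ introduces the factor $\det C_{m^{-1}}(1,0)=\pm(m^{-1}\cdot 1_R)$, which can vanish in positive characteristic, rendering the identity vacuous; and your guess that $cf-ed$ should be ``$\delta^{n-1}$ (or a comparable power) times $\sigma$'' is wrong. The correct statement, which the paper obtains by the second route you mention (explicitly computing $c,d,e,f$ via the rich/poor letter counts from the proof of Lemma~\ref{times}, writing the first row of $C_m(a,b)$ as $uv$ with $|u|=m^{-1}$), is the exact identity
\begin{equation*}
cf-ed=ma^2+(n-2m)ab-(n-m)b^2=\bigl(ma+(n-m)b\bigr)(a-b)=\sigma\delta .
\end{equation*}
Since $n\geq 2$, this divides $\det C_m(a,b)=\pm\sigma\delta^{n-1}$, which is a unit by hypothesis, and a divisor of a unit in a commutative ring is a unit. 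Without this computation (or an equivalent one) the Cramer's-rule step has no foundation, so the proposal as written does not constitute a proof.
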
 

\begin{proof}Assume $C_m(a,b)$ is invertible, i.e.,  $\det C_m(a,b)$ is a unit in $R.$ We begin by showing that $cf-ed$ is a unit in $R.$ Let $w=c_m(a,b)$ denote the Christoffel word corresponding to the first row of $C_m(a,b).$ Writing $w=uv$ with $|u|=m^{-1}$ (and hence $|v|=n-m^{-1})$, then as in the proof of Lemma~\ref{times} we have
\[c=|u|_a a +|u|_b b \,\,\,\,\,\,\,\,\,\,\,\, d=(|u|_a-1) a + (|u|_b +1) b\]
\[ e= |v|_a a +|v|_b b \,\,\,\,\,\,\,\,\,\,\,\, f= (|v|_a+1) a + (|v|_b -1) b.\]
This gives 
\begin{align*}
cf-ed & = (|u|_a + |v|_a) a^2 -(|u|_a-|u|_b + |v|_a-|v|_b) ab - (|u|_b+|v|_b) b^2 \\
& = ma^2 +(n-2m)ab -(n-m)b^2\\
& = (ma + (n-m)b)(a-b).
\end{align*}

It follows from (\ref{determinant}) that $cf-ed$ divides $\det C_m(a,b)$ which by assumption is a unit in $R,$ and hence $cf-ed$ is a unit in $R.$ 
 
 Next we claim that $f+d\neq 0.$ In fact, suppose to the contrary that $f+d=0.$ By Lemma~\ref{times} $c-d=a-b=f-e,$ and hence $c+e=0.$
 Thus
 \[C_m(a,b) \left (C_{m^{-1}}(1,0) + C_{m^{-1}}(0,1) \right ) = C_1(c,d) + C_1(e,f) = C_1(c,d) + C_1(-c,-d) = (0)_{n\times n}.\]
 But $C_{m^{-1}}(1,0) + C_{m^{-1}}(0,1)$ is the constant matrix $(1)_{n\times n}$  and hence 
 $C_m(a,b) (1)_{n\times n}=(0)_{n\times n}$ which is a contradiction since $C_m(a,b)$ is assumed to be invertible. 
 
 We have thus far that the matrix on the right hand side of (\ref{e-inverse}) is well defined (meaning that $cf-ed$ is a unit) and a Christoffel matrix (meaning that $f(fc-ed)^{-1} \neq -d(fc-ed)^{-1}).$
 We now show that this matrix is in fact the inverse of $C_m(a,b).$ First suppose that $df=0.$ Then either $d=0$ or $f=0.$
 If $d=0,$ then 
 \[C_m(a,b) C_{m^{-1}}(1,0)= C_1(c,0)= cI_{n\times n}\]
 where $I_{n\times n}$ denotes the identity matrix over $R.$ Also as $d=0$ we have that 
 $cf$ is a unit (and hence $c$ is a unit) and  $C_m(a,b)^{-1}= C_{m^{-1}}(c^{-1}, 0)$ which is a special case of equation (\ref{e-inverse}) when $d=0.$ Similarly if $f=0$ then we find that $C_m(a,b)^{-1}= C_{m^{-1}}(0, e^{-1})$  as required.
 Finally if $df\neq 0,$ then 
 \[C_m(a,b)C_{m^{-1}}(f,0)=C_1(fc,fd)\] and
 \[ C_m(a,b)C_{m^{-1}}(0,-d)=C_1(-ed,-fd)\] and therefore 
 \[C_m(a,b) \left ( C_{m^{-1}}(f,0) + C_{m^{-1}}(0,-d) \right ) = C_1(fc-ed, 0)\] or equivalently 
 \[C_m(a,b) C_{m^{-1}}(f,-d) = C_1(fc-ed, 0).\] 
 Hence \[C_m(a,b)^{-1}=C_{m^{-1}}(f(cf-ed)^{-1}, -d(fc-ed)^{-1}).\]
 \end{proof}
 
The previous lemma allows us to define an involution on (word) isomorphism classes of Christoffel words of length $n\geq 2.$ Let $\omega$ be a Christoffel class of length $n$ and of type $m\in (\Z/nZ)^\times.$ Pick a representative $w$ in $ \{0,1\}^n$ having odd many $1$'s (clearly $m$ and $n-m$ cannot both be even). Then the corresponding Christoffel matrix $C$ is invertible over the field $\F_2= \{0,1\}$ and its inverse $C^{-1}$ is a Christoffel matrix of type $m^{-1}$ corresponding to some Christoffel word $w'$ over the alphabet $\{0,1\}.$  The isomorphism class of $w'$ is then  the so-called {\it dual} of $\omega$ in the sense of \cite{bdr}.

 \begin{corollary}\label{main} Let $R$ be an integral domain and $n\geq 2. $ Then the set $GC_n(R)$ of all invertible $n\times n$ Christoffel matrices over $R$ is an abelian subgroup of $GL_n(R).$ The subset $H$ of $GC_n(R)$ consisting of all type $1$ invertible Christoffel matrices is a subgroup of $GC_n(R).$ If $R$ is a field, then the quotient $GC_n(R)/H$ is isomorphic to $ (\Z/nZ)^\times.$
 \end{corollary}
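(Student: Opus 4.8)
The plan is to establish Corollary~\ref{main} in three stages, relying almost entirely on Lemmas~\ref{times} and~\ref{inverse}. First I would show that $GC_n(R)$ is a subgroup of $GL_n(R)$. Closure under multiplication is immediate from Lemma~\ref{times}: if $A=C_{m_1}(a_1,b_1)$ and $B=C_{m_2}(a_2,b_2)$ are invertible, then $AB=C_{m_3}(a_3,b_3)$ with $m_3=m_1m_2$, and since the product of invertible matrices in $GL_n(R)$ is invertible, $AB\in GC_n(R)$; commutativity $AB=BA$ comes for free from the same lemma. The identity matrix is the Christoffel matrix $C_1(1,0)$, which is invertible, so $GC_n(R)$ contains the identity. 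Closure under inverses is exactly the content of Lemma~\ref{inverse}: the inverse of an invertible Christoffel matrix is again an invertible Christoffel matrix. Hence $GC_n(R)$ is an abelian subgroup.

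Second I would verify that $H$, the set of type~$1$ invertible Christoffel matrices, is a subgroup. A Christoffel matrix of type~$1$ has first column equal to a single $a$ followed by $n-1$ copies of $b$, and each subsequent column shifted down by~$1$; such matrices are precisely those of the form $aI+b(J-I)$ where $J$ is the all-ones matrix, i.e.\ the matrices having $a$ on the diagonal and $b$ off-diagonal. To see $H$ is closed, note that type is multiplicative in the sense of Lemma~\ref{times} ($m_3=m_1m_2$), so the product of two type~$1$ matrices has type $1\cdot 1=1$, and by Lemma~\ref{inverse} an invertible type~$1$ matrix has inverse of type $1^{-1}=1$; the identity has type~$1$. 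Thus $H$ is a subgroup, and being a subset of the abelian group $GC_n(R)$ it is automatically normal.

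Third, and this is the crux, I would define the type map $\tau\colon GC_n(R)\to(\Z/n\Z)^\times$ by $\tau(C_m(a,b))=m$ and show it is a surjective group homomorphism with kernel exactly $H$. That $\tau$ is a homomorphism is the relation $m_3=m_1m_2$ from Lemma~\ref{times} together with $\tau$ sending inverses to inverses via Lemma~\ref{inverse}. The kernel of $\tau$ is by definition the set of invertible Christoffel matrices of type~$1$, which is precisely $H$. The one genuine point requiring the field hypothesis is surjectivity: given $m\in(\Z/n\Z)^\times$, I must exhibit an invertible Christoffel matrix of type~$m$ over $R$. The natural candidate is $C_m(1,0)$, whose determinant by~(\ref{determinant}) is $\pm(m\cdot 1+(n-m)\cdot 0)(1-0)^{n-1}=\pm m$; this is a unit in a field $F$ precisely when $m\neq 0$ in $F$. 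The subtlety is that $m$ is an element of $(\Z/n\Z)^\times$ and need not be nonzero as an element of $F$ (for instance if $\mathrm{char}(F)$ divides $m$). I expect this to be the main obstacle, and I would resolve it by choosing the representative of $m$ more cleverly: since $\gcd(m,n)=1$, for any prime $p=\mathrm{char}(F)$ we have $p\nmid n$, so $p$ does not divide every integer congruent to $m$ modulo $n$; choosing $a,b\in F$ with $ma+(n-m)b\neq 0$ and $a\neq b$ (possible whenever $|F|\geq 3$, and handled directly for $F=\F_2$ where $m$ is odd forces $m\equiv 1$) produces an invertible $C_m(a,b)$ of the required type.

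Once $\tau$ is shown to be a surjective homomorphism with kernel $H$, the First Isomorphism Theorem gives $GC_n(R)/H\cong(\Z/n\Z)^\times$ immediately, completing the proof. I would emphasize that only surjectivity uses that $R$ is a field; the subgroup and homomorphism assertions hold over any integral domain.
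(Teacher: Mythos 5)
Your overall architecture is exactly the paper's: closure and commutativity from Lemma~\ref{times}, closure under inverses from Lemma~\ref{inverse}, the type map $\tau(C_m(a,b))=m$ as a homomorphism with kernel $H$, and the First Isomorphism Theorem. The place where your argument has a genuine gap is the step you yourself flag as the crux, namely surjectivity of $\tau$ over a field. First, the claim ``since $\gcd(m,n)=1$, for any prime $p=\mathrm{char}(F)$ we have $p\nmid n$'' is false: the characteristic of $F$ has nothing to do with $n$ (take $F=\F_2$, $n=4$, $m=3$). Second, ``choosing the representative of $m$ more cleverly'' is not an available move: the type is the literal number of $a$'s in the first column of an $n\times n$ matrix, an integer strictly between $0$ and $n$, so there is no freedom in the representative. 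Third, the assertion that a pair $(a,b)$ with $a\neq b$ and $ma+(n-m)b\neq 0$ exists ``whenever $|F|\geq 3$'' silently presupposes that $m\cdot 1_F$ and $(n-m)\cdot 1_F$ are not both zero --- which is the actual content to be proved --- and your $\F_2$ discussion only covers $m$ odd, leaving, e.g., $n=5$, $m=2$ over $\F_2$ unhandled.

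The correct repair is short and is what the paper does: suppose $C_m(1,0)$ is not invertible, so its row sum $m\cdot 1_F=0$. Since $\gcd(m,n-m)=\gcd(m,n)=1$, B\'ezout gives integers $u,v$ with $um+v(n-m)=1$; if also $(n-m)\cdot 1_F=0$ this would force $1_F=0$, a contradiction. Hence $(n-m)\cdot 1_F\neq 0$ and $C_m(0,1)$ is invertible of type $m$ (using that over a field $C_m(a,b)$ is invertible iff $ma+(n-m)b\neq 0$, from the determinant formula (\ref{determinant})). With that substitution your proof is complete and coincides with the paper's. One further small remark: you do not need Lemma~\ref{inverse} to see that $\tau$ or its restriction respects inverses --- once $GC_n(R)$ is known to be a group, any multiplicative map into a group automatically sends inverses to inverses.
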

 
 \begin{proof} It follows from lemmas~\ref{times} and \ref{inverse} that the set of all $n\times n$ invertible Christoffel matrices over $R$ is closed under multiplication and inverse and hence constitutes an abelian subgroup of $GL_n(R).$ Define $f : GC_n(R) \rightarrow (\Z/nZ)^\times$ by $f(C_m(a,b))=m.$ It follows immediately from Lemma~\ref{times} that $f$ is a group homomorphism. The kernel of $f$ consists precisely of all type $1$ invertible matrices over $R$ and hence $\mbox{Ker}(f)=H$ from which it follows that $H$ is a subgroup of $GC_n(R).$

Now if $R$ is a field, then  $f$ is surjective; to see this, it suffices to show that for each $m\in (\Z/nZ)^\times,$ either $C_m(1,0)$ or $C_m(0,1)$ (or both) is invertible. In fact, suppose that $C_m(1,0)$ is not invertible. This implies that the row sum $m1=0.$ Since $m$ and $n-m$ are coprime, it follows from Bézout's identity that $(n-m)1\neq 0$ and hence $C_m(0,1)$ is invertible. It follows that if $R$ is a field, then  $GC_n(R)/H \simeq  (\Z/nZ)^\times.$
 \end{proof}
 
Now assume that $F$ is a finite field. Then as $ (\Z/nZ)^\times$ is isomorphic to a quotient of the finite abelian group $GC_n(F),$ it follows that $GC_n(F)$ contains a subgroup isomorphic to $ (\Z/nZ)^\times.$ In particular, for each $n\geq 2$ the group $ (\Z/nZ)^\times$ admits a faithful representation in terms of $\varphi(n)$-many invertible $n\times n$ Christoffel matrices over $F.$ Since every finite abelian group is a subgroup of $ (\Z/nZ)^\times$ for some $n,$ we have 
 
 \begin{corollary} Let $F$ be a finite field and $G$ a finite abelian group. Then there exists $n\geq 2$ and a faithful representation $G\rightarrow GL_n(F)$ of degree $n$ over $F$ consisting entirely of invertible Christoffel matrices over $F.$ In particular  (taking $F=\F_2=\{0,1\}), $ $G$ admits a linear representation over $\F_2$  consisting of $|G|$-many distinct $n\times n$ invertible Christoffel matrices over  $\{0,1\}.$
 \end{corollary}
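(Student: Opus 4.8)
The plan is to derive this as a direct consequence of the structural results already established, chaining together three facts: that $(\Z/n\Z)^\times$ is realised inside $GC_n(F)$, that every finite abelian group embeds in some $(\Z/n\Z)^\times$, and that a subgroup of a group of Christoffel matrices still consists of Christoffel matrices. First I would fix the finite abelian group $G$ and invoke the classical number-theoretic fact that $G$ embeds as a subgroup of $(\Z/n\Z)^\times$ for a suitable $n\geq 2$; this follows from the structure theorem for finite abelian groups together with Dirichlet's theorem on primes in arithmetic progressions (one writes $G\cong \prod \Z/d_i\Z$ and chooses distinct primes $p_i\equiv 1 \pmod{d_i}$, realising each cyclic factor inside $(\Z/p_i\Z)^\times$ and taking $n=\prod p_i$ via the Chinese Remainder Theorem).

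Next I would use the discussion immediately preceding the statement: since $F$ is a finite field, $GC_n(F)$ is a finite abelian group admitting $(\Z/n\Z)^\times$ as a quotient, and because a finite abelian group is (noncanonically) isomorphic to any of its quotients appearing as a quotient of itself only when orders match, the cleaner route is the one already indicated in the text, namely that $GC_n(F)$ \emph{contains} a subgroup isomorphic to $(\Z/n\Z)^\times$. Concretely, the surjection $f:GC_n(F)\to(\Z/n\Z)^\times$ from Corollary~\ref{main} splits because in a finite abelian group every quotient is isomorphic to a subgroup; choosing a splitting gives an injection $\iota:(\Z/n\Z)^\times\hookrightarrow GC_n(F)$. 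Composing the embedding $G\hookrightarrow(\Z/n\Z)^\times$ with $\iota$ yields a faithful homomorphism $G\to GC_n(F)\subseteq GL_n(F)$ whose image consists entirely of invertible Christoffel matrices, since $GC_n(F)$ is by definition the set of such matrices.

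Finally I would confirm the degree and faithfulness claims and treat the $F=\F_2$ specialisation. The representation has degree $n$ by construction, and it is faithful because it is an injective group homomorphism; its image is a set of $|G|$ distinct matrices, each of which is an invertible Christoffel matrix over $F$ by membership in $GC_n(F)$. For the case $F=\F_2$ I would simply note that $\F_2=\{0,1\}$ is a finite field, so the general statement applies verbatim, giving $|G|$-many distinct invertible Christoffel matrices over $\{0,1\}$.

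I expect the main obstacle to be the very first step, the embedding $G\hookrightarrow(\Z/n\Z)^\times$, since this is the only genuinely nontrivial input and relies on Dirichlet's theorem; everything afterward is a formal splicing-together of results already proved in the paper. A secondary subtlety worth stating carefully is the splitting of $f$: one must justify that the quotient $(\Z/n\Z)^\times$ lifts to an isomorphic subgroup of the finite abelian group $GC_n(F)$, which is automatic for finite abelian groups but deserves an explicit sentence so the faithful representation genuinely lands among Christoffel matrices rather than merely exhibiting $(\Z/n\Z)^\times$ as an abstract quotient.
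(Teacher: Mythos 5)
Your argument is essentially the paper's own: embed $G$ into $(\Z/n\Z)^\times$ for suitable $n$, observe that $GC_n(F)$ is a finite abelian group having $(\Z/n\Z)^\times$ as a quotient (Corollary~\ref{main}) and hence contains a subgroup isomorphic to it, and compose. One small correction: the surjection $f:GC_n(F)\to(\Z/n\Z)^\times$ need not split (quotients of finite abelian groups are isomorphic to subgroups, but not necessarily via a section of the projection, cf.\ $\Z/4\Z\to\Z/2\Z$); this does not matter, since all you need -- and also state -- is the existence of \emph{some} subgroup of $GC_n(F)$ isomorphic to $(\Z/n\Z)^\times$, not one mapping isomorphically under $f$.
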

 
\noindent In what follows we compute the general Christoffel group over the field $\F_2$ :  


\begin{proposition}\label{neven}Let $n\geq 2$ and write $n=2^kp_1^{\alpha_1}\cdots p_t^{\alpha_t}$   with $k, \alpha_i\geq 0$ and $p_i$ distinct odd primes. Then 
\begin{equation} \label{dproduct}GC_n(\F_2)\simeq GC_{2^k}(\F_2) \times GC_{p_1^{\alpha_1}}(\F_2) \times \cdots \times GC_{p_t^{\alpha_t}}(\F_2) . \end{equation}
Moreover, $GC_{2^k}(\F_2)\simeq \Z/2\Z \times \Z/2^{k-1}\Z$ for each $k\geq 1$ while  $GC_{p_i^{\alpha_i}}(\F_2)\simeq  (\Z/p_i^{\alpha_i}Z)^\times$ for each $1\leq i\leq t.$
\end{proposition}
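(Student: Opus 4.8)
The plan is to establish the proposition in two stages corresponding to its two assertions.

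Stage 1 (the direct-product decomposition). First I would observe that by Corollary~\ref{main} the group $GC_n(\F_2)$ is abelian and fits into an exact sequence
\[
1 \longrightarrow H \longrightarrow GC_n(\F_2) \stackrel{f}{\longrightarrow} (\Z/n\Z)^\times \longrightarrow 1,
\]
where $H$ is the subgroup of type~$1$ Christoffel matrices. Over $\F_2$ the only pair of distinct symbols is $\{0,1\}$, so a type~$1$ matrix $C_1(a,b)$ forces $\{a,b\}=\{0,1\}$; since $C_1(1,0)$ is the identity and $C_1(0,1)$ is the all-ones complement of a shifted identity, $H$ has order~$2$. Thus $\lvert GC_n(\F_2)\rvert = 2\,\varphi(n)$. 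The cleanest route to \eqref{dproduct} is the Chinese Remainder Theorem: the ring isomorphism $\Z/n\Z \cong \Z/2^k\Z \times \prod_i \Z/p_i^{\alpha_i}\Z$ induces a decomposition of the Christoffel structure (a Christoffel word of length $n$ corresponds under CRT to a tuple of residues, and multiplication of types is componentwise by Lemma~\ref{times}). I would make this precise by constructing the isomorphism explicitly on types via $f$ and checking that the kernel $H$ (order~$2$) lands entirely in the $2^k$-factor, since the odd-prime-power factors $GC_{p_i^{\alpha_i}}(\F_2)$ have odd-order kernels that must be trivial (see Stage 2). The main work here is verifying that the abstract CRT decomposition of $(\Z/n\Z)^\times$ lifts to a decomposition of $GC_n(\F_2)$ compatible with the group operation; because the group is abelian and we know its order and the orders of the candidate factors, it suffices to exhibit the projection homomorphisms and count.

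Stage 2 (the factors). For the odd-prime-power factors I would show $H$ is trivial in $GC_{p_i^{\alpha_i}}(\F_2)$, so that $f$ becomes an isomorphism $GC_{p_i^{\alpha_i}}(\F_2)\cong (\Z/p_i^{\alpha_i}\Z)^\times$. Triviality of $H$ follows from the invertibility criterion after \eqref{determinant}: over $\F_2$ the matrix $C_1(0,1)$ has row sum $(n-1)\cdot 1 = n-1$, which is $0$ in $\F_2$ exactly when $n$ is odd; hence for odd $n$ the nontrivial element $C_1(0,1)$ of $H$ is \emph{not} invertible, so $H\cap GC_n(\F_2)=\{I\}$ and $f$ splits the sequence. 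For the $2$-power factor, $n=2^k$ makes $n-1$ odd, so $C_1(0,1)$ \emph{is} invertible and $H\cong\Z/2\Z$ genuinely sits inside $GC_{2^k}(\F_2)$; thus $\lvert GC_{2^k}(\F_2)\rvert = 2\varphi(2^k)=2^k$. To identify this group I would use the exact sequence $1\to H\to GC_{2^k}(\F_2)\to(\Z/2^k\Z)^\times\to1$ together with the classical structure $(\Z/2^k\Z)^\times\cong\Z/2\Z\times\Z/2^{k-1}\Z$ for $k\geq3$ (and the small cases $k=1,2$ checked by hand). Since the whole group is abelian of order $2^k$ with a central $\Z/2\Z$ lying in $H$, I would determine whether the extension splits and whether it alters the invariant factors, aiming to show the result is again $\Z/2\Z\times\Z/2^{k-1}\Z$.

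The hard part will be the $2$-power case. Knowing $GC_{2^k}(\F_2)$ is abelian of order $2^k$ is not enough to pin down its isomorphism type; I must control the extension of $(\Z/2^k\Z)^\times$ by $H$ precisely. The decisive computation is to find the exponent of $GC_{2^k}(\F_2)$ and the $2$-rank of its elements, i.e., to locate an explicit Christoffel matrix whose order realizes the large cyclic factor $\Z/2^{k-1}\Z$ and to check that $H$ contributes an independent $\Z/2\Z$ rather than merging into that cycle. Concretely I would track, for a generator $m$ of the cyclic part of $(\Z/2^k\Z)^\times$ (e.g. $m=3$ or $m=5$), how the symbol-pair $(a,b)$ evolves under the powering formula of Lemma~\ref{times} — specifically the evolution of $\delta=a-b$ and $\sigma=ma+(n-m)b$ via the multiplicative relations $\delta_3=\delta_1\delta_2$ and $\sigma_3=\sigma_1\sigma_2$ — to decide when a preimage of a type-generator has order $2^{k-1}$ versus $2^k$. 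Establishing that no such preimage has order $2^k$ (equivalently, that the sequence does not split into a single cyclic group) is the crux that forces the invariant-factor decomposition $\Z/2\Z\times\Z/2^{k-1}\Z$.
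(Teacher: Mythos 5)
Your skeleton matches the paper's (for odd $n$, show $H$ is trivial so $GC_n(\F_2)\simeq(\Z/n\Z)^\times$ and apply CRT; for even $n$, split off the $2$-power part; then compute $GC_{2^k}(\F_2)$), but both of the places you yourself flag as ``the main work'' are left as genuine gaps, and in each case the tool you propose would not close them. First, the CRT lifting. The natural projection $x_m\mapsto x_{m\bmod 2^k}$ (keep the symbol, reduce the type) is \emph{not} a homomorphism $GC_n(\F_2)\to GC_{2^k}(\F_2)$: for $n=24$ the product of the type-$5$ and type-$7$ matrices is governed by $\left\lfloor 35/24\right\rfloor=1$, which is odd, so the symbol flips, while in $GC_8(\F_2)$ the product of types $5$ and $7$ is governed by $\left\lfloor 35/8\right\rfloor=4$, which is even, so it does not; the two sides land on opposite symbols. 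The paper's map (Lemma~\ref{Psi}) carries a twist, $x_m\mapsto i^{\lfloor m/2^k\rfloor}(x)_{m\bmod 2^k}$ where $i$ exchanges the two symbols, and verifying that it is a homomorphism requires the parity identity $q+q'\equiv q_1+q_2+q_3\bmod 2$, which in turn uses that $n/2^k$, $r_1$, $r_2$ are all odd. ``Abelian, equal orders, plus projections'' cannot substitute for this: equal order does not determine an abelian $2$-group, and the projections are precisely what must be constructed.

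Second, the $2$-power case. Tracking $\delta$ and $\sigma$ is vacuous over $\F_2$: for every invertible Christoffel matrix there one has $\delta=a-b=1$ and $\sigma=ma+(n-m)b=1$ identically, so the relations $\delta_3=\delta_1\delta_2$ and $\sigma_3=\sigma_1\sigma_2$ carry no information about which of the two symbols a power of $a_3$ lands on. What actually decides the order of the lift $a_3$ of the generator $3\in(\Z/2^k\Z)^\times$ is the parity of $\lfloor 3^j/2^k\rfloor$ (formula~(\ref{powers})), and the paper proves by induction that $(3^{2^{k-2}}-1)/2^k$ is odd, whence $(a_3)^{2^{k-2}}=b_1\neq a_1$ and $a_3$ has order $2^{k-1}$; combined with the order-$2$ element $a_{2^k-1}$ lying outside $\langle a_3\rangle$, this forces $\Z/2\Z\times\Z/2^{k-1}\Z$. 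You would need to supply this number-theoretic input or an equivalent. Two smaller slips: $\mathrm{Card}\,GC_n(\F_2)$ equals $\varphi(n)$, not $2\varphi(n)$, when $n$ is odd (your Stage~1 asserts $H$ has order $2$ unconditionally, contradicting your own Stage~2); and the dichotomy for the lift of a type-generator is order $2^{k-2}$ versus $2^{k-1}$, not $2^{k-1}$ versus $2^{k}$, since the quotient $(\Z/2^k\Z)^\times$ is not cyclic for $k\geq 3$.
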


\begin{proof}
First assume $n$ is odd, i.e., $ n= p_1^{\alpha_1}\cdots p_t^{\alpha_t}.$ As in Corollary~\ref{main}, let $H$ denote the subgroup of $GC_n(\F_2)$ consisting all type $1$ invertible Christoffel matrices. Then $H=\{C_1(1,0)\}$ since $C_1(0,1)$ has row sum $n-1\equiv 0 \bmod 2$ and hence is not invertible. 
It follows from Corollary~\ref{main} that $GC_n(\F_2) \simeq  (\Z/nZ)^\times.$ Thus for $n$ odd,  we have 
\[GC_n(\F_2)\simeq (\Z/nZ)^\times \simeq  (\Z/p_1^{\alpha_i}Z)^\times \times \cdots \times (\Z/p_t^{\alpha_t}Z)^\times \simeq GC_{p_1^{\alpha_1}}(\F_2) \times \cdots \times GC_{p_t^{\alpha_t}}(\F_2)\] as required.

Next assume $n$ is even and write $n=2^kn'$ with $k\geq 1$ and $n'= p_1^{\alpha_1}\cdots p_t^{\alpha_t}.$ In view of the odd case, in order to establish  (\ref{dproduct}) it suffices to show that $GC_n(\F_2) \simeq GC_{2^k}(\F_2) \times (\Z/n'Z)^\times.$  We note that for $n$  even,  the order of $GC_n(\F_2)$ is twice the order of $(\Z/n\Z)^\times.$ 
This is because when $n$ is even, the Christoffel matrices $C_m(0,1)$ and $C_m(1,0)$ are both invertible (for each $m\in (\Z/nZ)^\times).$ For  $m\in (\Z/nZ)^\times,$ put  $a_m=C_m(1,0)$ and $b_m=C_m(0,1).$ Applying Lemma~\ref{times} we find :

        \begin{equation}\label{prod1}a_{m_1}a_{m_2}=b_{m_1}b_{m_2} = \begin{cases}
  a_{m_1m_2}  &  \text{if $\left \lfloor \frac{m_1m_2}n \right \rfloor$ is even} \\
 b_{m_1m_2}  &  \text{if $\left \lfloor \frac{m_1m_2}n \right \rfloor$ is odd}
\end{cases}\end{equation}
and    
  \begin{equation}\label{prod2}a_{m_1}b_{m_2}=b_{m_1}a_{m_2} = \begin{cases}
  b_{m_1m_2}  &  \text{if $\left \lfloor \frac{m_1m_2}n \right \rfloor$ is even} \\
 a_{m_1m_2}  &  \text{if $\left \lfloor \frac{m_1m_2}n \right \rfloor$ is odd}
\end{cases}\end{equation}                
 \noindent where as always the indices are taken modulo $n$ in $(\Z/nZ)^\times.$ Thus for $n$ even,  $GC_n(\F_2)=\{a_m,b_m\}_{m\in (\Z/nZ)^\times}$ subject to the above relations. We notice that $a_1$ is the identity and $b_1, a_{n-1}$ and $b_{n-1}$ are each elements  of order $2$ from which it follows that $CG_n(\F_2)$ is never cyclic for $n>2$ even.   
 
 It is easily checked by induction that for each $j\geq 2,$ 

 \begin{equation}\label{powers}(a_m)^j=(b_m)^j = \begin{cases}
  a_{m^j}  &  \text{if $\left \lfloor \frac{m^j}n \right \rfloor$ is even} \\
 b_{m^j}  &  \text{if $\left \lfloor \frac{m^j}n \right \rfloor$ is odd}.
\end{cases}\end{equation} 

It will be convenient to have a general formula for the product $x_{m_1}y_{m_2}$ with $x,y \in \{a,b\}$ and $m_1,m_2\in (\Z/nZ)^\times.$ To this end, we consider $(\{a,b\},*)$ as the cyclic group of order two with $a$ being the identity and $*$ the group law. Thus $a*a=b*b=a$ and $a*b=b*a=b.$ Also let $i:\{a,b\}\rightarrow \{a,b\}$ denote the involution which exchanges $a$ and $b.$ Then it is easily verified that $i(x*y)=i(x)*y=x*i(y)$ for all $x,y\in \{a,b\},$ from which it follows that 
\begin{equation}\label{ieq} i^{q_1+q_2}(x*y) =i^{q_1}(x)*i^{q_2}(y) \end{equation}
for all nonnegative integers $q_1,q_2$ and all $x,y\in \{a,b\}.$ 

\begin{lemma}\label{L0} Let $x_{m_1},y_{m_2}\in GC_n(\F_2).$ Then 
\begin{equation}\label{iidentity} x_{m_1}y_{m_2}=i^q(x*y)_r \end{equation}
where $m_1m_2=qn+r$ with $r\in (\Z/nZ)^\times.$
\end{lemma}

\begin{proof} Writing $m_1m_2=qn+r$ with $r\in (\Z/nZ)^\times$ so that $q=\left \lfloor \frac{m_1m_2}n \right \rfloor,$ and using (~\ref{prod1}) and (~\ref{prod2}) we see that if $x=y,$ then
 \[x_{m_1}y_{m_2}= \begin{cases}
  a_r  &  \text{if $q$ is even} \\
 b_r  &  \text{if $q$ is odd}
\end{cases}\] 
On the other hand if $x=y$ we have
 \[i^q(x*y)_r=i^q(a)_r= \begin{cases}
  a_r  &  \text{if $q$ is even} \\
 b_r  &  \text{if $q$ is odd}
\end{cases}\] 
A similar calculation handles the case when $x\neq y.$
\end{proof}

\noindent Thus $GC_n(\F_2)=\{x_m\,|\, x\in \{a,b\},\, m\in (\Z/n'\Z)^\times\}$ and the group law is given by (\ref{iidentity}).

\begin{lemma}\label{Psi} Let \[\Psi : GC_n(\F_2) \rightarrow GC_{2^k}(\F_2) \times (\Z/n'Z)^\times\] be given by 
\[x_m \mapsto (i^q(x)_r, r')\]
where $m=q2^k+r$ and $m=q'n'+r'$ with $r\in (\Z/2^k\Z)^\times $ and $r'\in (\Z/n'\Z)^\times.$ Then $\Psi$ is a group isomorphism.   
\end{lemma} 

\begin{proof}We begin by showing that $\Psi$ is a group homomorphism. For this it suffices to show that the restriction of $\Psi$ to each of the two coordinates is a group homomorphism. By Lemma~\ref{times} the restriction of $\Psi$ to the second coordinate is a group homomorphism as the type of the product of two Christoffel matrices of size $n$ is just the product of the types taken modulo $n.$ So let \[\Psi _1 : GC_n(\F_2) \rightarrow GC_{2^k}(\F_2) \] be given by $\Psi_1(x_m)=i^q(x)_r$ where $m=q2^k+r$ with $r\in (\Z/2^k\Z)^\times .$ We show $\Psi_1$ is a group homomorphism. Let $x_{m_1},y_{m_2}\in GC_n(\F_2)$ and write $m_1m_2=qn+r,$ $m_1=q_12^k+r_1,$ $m_2=q_22^k+r_2,$ $r_1r_2=q_32^k+r_3$ and $r=q'2^k + r'$ with $r\in  (\Z/nZ)^\times$ and $r_1,r_2,r_3,r'\in (\Z/2^kZ)^\times.$
It follows immediately that $r'=r_3.$ Applying Lemma~\ref{L0} we have that $x_{m_1}y_{m_2}=i^q(x*y)_r.$
Therefore \[\Psi_1(x_{m_1}y_{m_2})=\Psi_1(i^q(x*y)_r)=i^{q'}(i^q(x*y))_{r'}=i^{q+q'}(x*y)_{r'}.\]
On the other hand $\Psi_1(x_{m_1})=i^{q_1}(x)_{r_1}$ and $\Psi_1(y_{m_2})=i^{q_2}(y)_{r_2}$ and therefore
\[\Psi_1(x_{m_1})\Psi_1(y_{m_2})=i^{q_1}(x)_{r_1}i^{q_2}(y)_{r_2}=i^{q_3}(i^{q_1}(x)*i^{q_2}(y))_{r_3}=i^{q_3}(i^{q_1+q_2}(x*y))_{r_3}=i^{q_1+q_2+q_3}(x*y)_{r_3}\]
where equation (\ref{ieq}) was used for the penultimate equality. Having already established that $r'=r_3,$ to show that $\Psi_1(x_{m_1}y_{m_2})=\Psi_1(x_{m_1})\Psi_1(y_{m_2})$ it remains to show that $q+q'\equiv q_1+q_2+q_3 \bmod 2.$ However,\[m_1m_2=qn+r=q(\frac{n}{2^k})2^k+q'2^k+r'=(q(\frac{n}{2^k}) +q')2^k+r'.\]
On the other hand,
\[m_1m_2=(q_12^k+r_1)(q_22^k+r_2)=(q_1q_22^k+q_1r_2+q_2r_1)2^k+r_1r_2=(q_1q_22^k+q_1r_2+q_2r_1 +q_3)2^k +r_3 \]
from which it follows that \[q(\frac{n}{2^k}) +q'=q_1q_22^k+q_1r_2+q_2r_1 +q_3.\]
Now since  $\frac{n}{2^k}, r_1$ and $r_2$ are each odd, reducing modulo $2$ we find that $q+q'\equiv q_1+q_2+q_3 \bmod 2$ as required.

Having shown that $\Psi$ is a group homomorphism, let us show that $\Psi$ is injective. Assume $x_m\in \mbox{Ker}\,\Psi.$ Then $\Psi(x_m)=(a_1,1)$ where $a_1$ here denotes the identity Christoffel matrix of size $2^k.$ Then writing $m=q2^k+r$ we have $a=i^q(x)$ and $r=1.$ Also $m=q'n'+1.$ Thus as $m\equiv 1 \bmod 2^k$ and $m\equiv 1 \bmod n'$ it follows that $m\equiv 1 \bmod n$ and hence $m=1$ and $q=0.$ This implies that $a=i^q(x)=i^0(x)=x$ and hence $x_m=a_1$  where this time $a_1$ denotes the identity Christoffel matrix of size $n.$ Hence $\Psi$ is injective. To see that $\Psi$ is surjective, it suffices to show that $\mbox{Card}\, GC_n(\F_2)= \mbox{Card}\, (GC_{2^k}(\F_2) \times (\Z/n'Z)^\times).$ In fact, $\mbox{Card}\, GC_n(\F_2)=2\,\mbox{Card}\,(\Z/n'\Z)^\times$ while 
\begin{align*}\mbox{Card}\, (GC_{2^k}(\F_2) \times (\Z/n'Z)^\times) &=\mbox{Card}\,\, GC_{2^k}(\F_2)\, \mbox{Card}\, (\Z/n'Z)^\times\\&=2\,\mbox{Card}\, (\Z/2^kZ)^\times \, \mbox{Card}\, (\Z/n'Z)^\times\\&=2\,\mbox{Card}\,(\Z/n\Z)^\times
\end{align*}
as required. This completes our proof of the lemma. \end{proof} 

\noindent It remains to compute $GC_{2^k}(\F_2).$ 

\begin{lemma}\label{L1}$GC_{2^k}(\F_2)\simeq \Z/2\Z \times \Z/2^{k-1}\Z\,$ for each $k\geq 1.$ \end{lemma}  

\begin{proof} Recall that for $k\geq 3,$  $(\Z/2^k\Z)^\times$ is isomorphic to $\Z/2\Z \times \Z/2^{k-2}\Z$ (a result due to Gauss) and  is generated by $2^k-1$ (of order 2) and $3$ of order $2^{k-2}$ (see for example \cite{ir}). In particular, the order of $(\Z/2^k\Z)^\times$ is $2^{k-1}$ and hence the order of $GC_{2^k}(\F_2)=2^k.$ We begin by proving the lemma for  $k=1,2.$ For $k=1$ we have $GC_2(\F_2)=\{a_1,b_1\}$ and is hence isomorphic to $\Z/2\Z$ as required. For $k=2$ we have $GC_4(\F_2)=\{a_1,b_1,a_3,b_3\}$ and since $GC_4(\F_2)$ is not cyclic, it follows that $GC_4(\F_2)\simeq \Z/2\Z \times \Z/2\Z$ as required. Assume henceforth that $k\geq 3.$ We will show that $GC_{2^k}(\F_2)$ is isomorphic to  $\Z/2\Z \times \Z/2^{k-1}\Z$ and generated by the elements $a_{2^k-1}$ of order $2$ and $a_3$ of order $2^{k-1}.$

We begin by showing (by induction on $k)$ that for each $k\geq 3$ we have  $3^{2^{k-2}}=q_k2^k+1$ with $q_k$ odd.   If $k=3,$ then $3^2=1\cdot 2^3 + 1$ so $q_3=1.$ Now assume $q_k=\frac{3^{2^{k-2}}-1}{2^k}$ is odd and let's show that 
 $q_{k+1}$ is odd. Writing
 \[q_{k+1}=\frac{3^{2^{k-1}}-1}{2^{k+1}}=\frac {(3^{2^{k-2}}-1)}{2^k} \cdot \frac {(3^{2^{k-2}}+1)}{2}.\]
 Now the first factor on the right hand side is odd by induction hypothesis. As for the second factor, we have
 \[\frac{3^{2^{k-2}}+1}{2}=\frac{q_k2^k+1+1}{2}=\frac{q_k2^k+2}{2}=q_k2^{k-1}+1 \equiv 1 \bmod 2\]
 as required. 
It follows from (\ref{powers})  that
$(a_3)^{2^{k-2}}=b_1,$ and hence $a_3$  is  of order $2^{k-1}$ in $GC_{2^k}(\F_2)$ which is half the cardinality of $GC_{2^k}(\F_2).$ Since $GC_{2^k}(\F_2)$ is not cyclic, there can be no element of higher order. Finally as noted earlier, $a_{2^k-1}\in GC_{2^k}(\F_2)$ is of order $2$ and furthermore is not contained in the subgroup $<a_3>$ of $GC_{2^k}(\F_2)$ generated by $a_3,$ for otherwise $2^k-1$ would belong to the subgroup $<3>$ of  $(\Z/2^k\Z)^\times $ generated by $3,$ a contradiction. Thus $<a_3>\,\simeq \Z/2^{k-1}\Z$ and the quotient $GC_{2^k}(\F_2)/<a_3>\, \simeq \Z/2\Z.$
Moreover, as $GC_{2^k}(\F_2)$ contains an element of order $2$ not in $<a_3>$ (namely $a_{2^k-1}),$ it follows that $ GC_{2^k}(\F_2) \simeq \Z/2\Z \,\, \times<a_3> \, \simeq \Z/2\Z \times  \Z/2^{k-1}\Z.$
\end{proof}

\noindent This completes our proof of Proposition~\ref{neven}. \end{proof}

\noindent We end with a few examples of representations of $(\Z/n\Z)^\times$ by Christoffel matrices in $GC_n(\F_2)$  : \\

 
 
\begin{example}\rm Let $n=20.$ Using the generators $3$ and $19$ of $(\Z/20\Z)^\times$ we construct different Christoffel representations of $(\Z/20\Z)^\times$ contained in $GC_{20}(\F_2).$ For example:  \[< a_3, a_{19}> =\{a_1, a_3, b_7, a_9,  a_{11}, b_{13}, a_{17}, a_{19}\}\]  

\[< a_3, b_{19}> =\{a_1, a_3, b_7, a_9, b_{11}, a_{13}, b_{17}, b_{19}\}\]

\[< b_3, a_{19}> =\{a_1, b_3, a_7, a_9, a_{11}, a_{13}, b_{17}, a_{19}\}\]

\[< b_3, b_{19}> =\{a_1, b_3, a_7, a_9, b_{11}, b_{13}, a_{17}, b_{19}\}\]
Notice that in each representation we have a Christoffel matrix of each type $m \in (\Z/20\Z)^\times.$
\end{example}

\begin{example}\rm Let $n=8.$ Then $GC_8(\F_2)=\{a_1,b_1,a_3,b_3,a_5,b_5,a_7,b_7\}$ and is generated by $a_3$ and $a_7.$  Also,  $(\Z/8Z)^\times,$ which is isomorphic to $\Z/2\Z\times \Z/2\Z,$ is isomorphic to the subgroup $\{a_1,b_1,a_7,b_7\}$  and moreover this is the unique subgroup of $GC_8(\F_2)$ isomorphic to $(\Z/8Z)^\times.$  The corresponding Christoffel words of length $8$ representing $(\Z/8Z)^\times$ are  $\{10^7, 01^7, 1^70, 0^71\}.$ Notice only types $1$ and $7$ occur. In general, for $n\equiv 0\bmod 8$ even, we obtain interesting Christoffel representations of  $(\Z/n\Z)^\times$ over $\F_2.$ By interesting we mean that the representation does not necessarily include a Christoffel matrix of each type $m\in (\Z/n\Z)^\times$ as in the case of $n=8.$   \end{example}

 {\bf Postscript :} A few days after posting the original version of this note on arXiv, we were informed of an interesting  new paper by Christophe Reutenauer and Jeffrey Shallit~\cite{rs} which contains some of the same results in addition to other related results.


\begin{thebibliography}{40}





\bibitem{bdr} V. Berthé, A. de Luca, C. Reutenauer, On an involution on Christoffel words and Sturmian morphisms, {\it European J. Combin.} {\bf 29} (2008) no. 2, p. 535--553.

\bibitem{C} E.B. Christoffel, Observatio arithmetica, {\it Annali di Matematica} {\bf 6} (1875), p. 148--152.


\bibitem{h} M. Hariprasad, Determinant of binary circulant matrices, {\it Special Matrices}, De Gruyter Open Access September 3, 2019 https://doi.org/10.1515/spma-2019-0008. 

\bibitem{ir} K. Ireland, M. Rosen, A classical introduction to modern number theory, New York : Springer, 1982.

\bibitem{jz} O. Jenkinson L.Q. Zamboni,  Characterisations of balanced words via orderings, {\it Theoret. Comput. Sci.} {\bf 310} (2004), p. 247--271.

\bibitem{ks} I. Kra, S. Simanca, On  circulant matrices, {Notices of the AMS}, {\bf 59} (3) (2012), p. 368--377.

\bibitem{mrs} S. Mantaci, A. Restivo,  M. Sciortino, Burrows-Wheeler transform and Sturmian words, {\it Inform. Process. Lett.} {\bf 86} (2003), p. 241--246. 

\bibitem{rs} C. Reutenauer, J. Shallit, Christoffel matrices and Sturmian determinants, September 15, 2024, 	arXiv:2409.09824. 





\end{thebibliography}
\end{document}